\documentclass[reqno]{amsart}
\usepackage[a4paper,
bindingoffset=0.2in,
left=1in,
right=1in,
top=1in,
bottom=1in,
footskip=.25in]{geometry}
\usepackage[english]{babel}
\usepackage{amsthm}
\usepackage{mathrsfs}
\usepackage{amsmath}
\usepackage{amsfonts}
\usepackage{mathtools}
\usepackage{xcolor}
\usepackage{wrapfig}
\usepackage{hyperref}
\usepackage{float}
\usepackage{pgfplots}
\usepackage{amssymb,enumitem}
\pgfplotsset{compat=1.18,width=10cm}
\usepackage{subcaption}
\usepackage{tikz}
\usepackage{amssymb} 
\usepackage[numbers]{natbib}
\newtheorem{theorem}{Theorem}
\newtheorem{lemma}[theorem]{Lemma}

\newtheorem{remark}[theorem]{Remark}

\theoremstyle{definition}

\theoremstyle{remark}
\numberwithin{theorem}{section}
\DeclareUnicodeCharacter{2217}{*}
\DeclareUnicodeCharacter{211D}{\mathbb{R}}

\newcommand{\be}{\begin{equation}}
\newcommand{\ee}{\end{equation}}
\numberwithin{equation}{section}
\allowdisplaybreaks
\author[Komal Verma]
{Komal Verma}
\address{Komal Verma\hfill\break
Department of Mathematics\newline
Birla Institute of Technology and Science Pilani \newline
Pilani Campus, Vidya Vihar \newline
Pilani, Jhunjhunu \newline
Rajasthan, India - 333031}
\email{p20230058@pilani.bits-pilani.ac.in;kv8802727566@gmail.com }

\author[Gaurav Dwivedi ]
{Gaurav Dwivedi}
\address{Gaurav Dwivedi \hfill\break
Department of Mathematics\newline
Birla Institute of Technology and Science Pilani \newline
Pilani Campus, Vidya Vihar \newline
Pilani, Jhunjhunu \newline
Rajasthan, India - 333031}
\email{gaurav.dwivedi@pilani.bits-pilani.ac.in}
\subjclass[2020]{35J20;35J60;35J92;35B09}
\keywords{Mountain pass theorem; Semipositone problem; Positive solutions; Comparison principles; Maximum principles }
\begin{document}
\title[ Semipositone Problem ] {Positive Solutions for a Mixed Local-Nonlocal Problem with Semipositone Nonlinearity }

\begin{abstract}
In this article, we prove the existence of at least one positive solution for the mixed local-nonlocal semipositone problem 
\begin{equation*}
    \left\{
    \begin{aligned}
        -\Delta_p u+ (-\Delta)^s_p u &= \lambda f(u) && \text{in } \Omega, \\
        u &= 0 && \text{in } \mathbb{R}^N \setminus \Omega,
    \end{aligned}
    \right.
\end{equation*}
 using mountain pass arguments, comparison principles and regularity principles.
\end{abstract}
\maketitle
\section{Introduction}
We study the existence of positive solutions to
\begin{equation}\label{eq:1.1}
\left\{
\begin{aligned}
-\Delta_p u+(-\Delta)^s_p u &= \lambda f(u) && \text{in } \Omega,\\
u &=0 && \text{in } \mathbb{R}^N\setminus\Omega,
\end{aligned}
\right.
\end{equation}
where $N>2$, $\Omega\subset\mathbb{R}^N$ is a bounded domain with smooth boundary, $s\in(0,1)$, $1<p$, $sp<N$, and $\lambda>0$. The function $f:\mathbb{R}\to\mathbb{R}$ is continuous. Here
\[
\Delta_p u=\operatorname{div}(|\nabla u|^{p-2}\nabla u)
\]
is the $p$-Laplacian, and $(-\Delta)^s_p$ denotes the fractional $p$-Laplacian
\[
(-\Delta)^s_p u(x)=2\lim_{\varepsilon\to0^+}
\int_{\{|x-y|>\varepsilon\}}
\frac{|u(x)-u(y)|^{p-2}(u(x)-u(y))}{|x-y|^{N+sp}}\,dy .
\]

Throughout the paper we assume the following hypotheses.

\begin{enumerate}[label=(H\arabic*)]

\item There exist $q\in\left(p-1,\min\left\{\frac{sp}{N}p_s^*,\,p_s^*-1\right\}\right)$ and $A,B>0$ such that
\begin{equation}\label{eq:1.2}
\begin{cases}
A(s^q-1)\le f(s)\le B(s^q+1) & \text{for } s>0,\\
f(s)=0 & \text{for } s\le -1,
\end{cases}
\end{equation}
where $p_s^*=\dfrac{Np}{N-sp}$ is the fractional critical Sobolev exponent.

\item Let $F(t)=\displaystyle\int_0^t f(s)\,ds$. There exist $A_1,B_1,C_1>0$ such that
\begin{equation}\label{eq:1.3}
F(s)\le B_1(|s|^{q+1}+1)\qquad \text{for all } s\in\mathbb{R},
\end{equation}
and
\begin{equation}\label{eq:1.4}
A_1(s^{q+1}-C_1)\le F(s)\qquad \text{for all } s\ge0 .
\end{equation}

\item (Ambrosetti--Rabinowitz condition) There exist $\theta>p$ and $M\in\mathbb{R}$ such that
\begin{equation}\label{eq:1.5}
sf(s)\ge \theta F(s)+M
\qquad \text{for all } s\in\mathbb{R}.
\end{equation}

\end{enumerate}

The study of positive solutions to Dirichlet problems with semipositone nonlinearities has received considerable attention. Castro and Shivaji~\cite{castro1988non} were among the first to investigate nonnegative solutions for a class of non-positone problems in one dimension. This was followed by Castro and Shivaji~\cite{castro1989nonnegative}, who established the existence of nonnegative radial solutions for a non-positone problem involving the Laplacian in the unit ball. Subsequently, Ali et al.~\cite{ali1993uniqueness} examined the uniqueness and stability of nonnegative solutions for semipositone problems on a ball. Since then, numerous works have addressed positive solutions of Dirichlet problems with semipositone nonlinearities; see, for instance, \cite{biswas2024study,brown1982simple,caldwell2007positive,castro1994uniqueness,castro1998positive} and the references therein.

A variety of analytical techniques have been employed to study the existence and nonexistence of solutions, including sub- and supersolution methods, degree theory, fixed point theory, and bifurcation techniques; see \cite{afrouzi2008critical,agarwal2006existence,ambrosetti1994positive,biswas2025semipositone,dhanya2025positive}. In the present setting, the AR-type condition~\eqref{eq:1.5} enables the use of the mountain pass theorem to establish the existence of a solution.

More recently, motivated by the work of Biagi et al.~\cite{biagi2022mixed}, mixed local--nonlocal problems have been widely investigated; see \cite{anthal2025mixed,biagi2024brezis,garain2023class,su2022regularity}. In this direction, De Filippis and Mingione~\cite{de2024gradient} obtained important regularity results for equations involving both local and nonlocal operators. In particular, under the assumption $p>sq$, they proved interior Hölder continuity of the gradients and global almost-Lipschitz regularity. Furthermore, Antonini et al.~\cite{antonini2025global} established that weak solutions to the Dirichlet problem are $C^{1,\theta}$ regular up to the boundary and derived comparison and maximum principles, which will play a key role in our analysis.

Our results extend \citep[Theorem~1.1]{castro2016existence} and \citep[Theorem~1.1]{lopera2023existence}. Castro et al.~\cite{castro2016existence} proved the existence of positive solutions for a semipositone $p$-Laplacian problem, while Lopera et al.~\cite{lopera2023existence} obtained analogous results for the fractional $p$-Laplacian using mountain pass arguments combined with comparison principles, regularity results, and a priori estimates.

In this paper, we show that the associated energy functional $E_{\lambda}$ admits a mountain pass type critical point for sufficiently small $\lambda>0$. By exploiting regularity properties of the operator $-\Delta_p u+(-\Delta)^s_p u$, together with comparison and maximum principles, we further prove that for the same range of $\lambda>0$, problem~\eqref{eq:1.1} admits a weak positive solution.

The main result of this paper is as follows:

\begin{theorem}\label{main}
Let $\Omega \subset \mathbb{R}^N$ be a bounded open set with smooth boundary $\partial\Omega$. Assume that hypotheses $(H1)-(H3)$ hold. Then there exists $\lambda^*>0$ such that for every $\lambda \in (0,\lambda^*)$, problem \eqref{eq:1.1} admits a weak positive solution 
$u_\lambda \in C^{1,\theta}(\overline{\Omega})$
for some $\theta \in (0,1)$.
\end{theorem}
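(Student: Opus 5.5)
The plan follows the scheme of \citep[Theorem~1.1]{castro2016existence} and \citep[Theorem~1.1]{lopera2023existence}, adapted to the operator $-\Delta_p+(-\Delta)^s_p$. Work in $X=\{u\in W^{1,p}_0(\Omega)\}$, extended by zero outside $\Omega$. Since $\Omega$ is bounded and smooth, the Gagliardo seminorm is controlled by the gradient norm, so $\|u\|^p=\int_\Omega|\nabla u|^p+[u]_{s,p}^p$ is equivalent to $\|\nabla u\|_p^p$. The energy is
\[
E_\lambda(u)=\frac1p\int_\Omega|\nabla u|^p+\frac1p\iint\frac{|u(x)-u(y)|^p}{|x-y|^{N+sp}}\,dx\,dy-\lambda\int_\Omega F(u)\,dx ,
\]
and it is $C^1$ by (H1)--(H2), since $q+1<p_s^*\le p^*$.

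\textbf{Step 1: mountain pass geometry, uniform in small $\lambda$.} By \eqref{eq:1.3} and Sobolev,
\[
E_\lambda(u)\ge \tfrac1p\|u\|^p-C\lambda\|u\|^{q+1}-C\lambda .
\]
Choose $r_\lambda=c\lambda^{-1/(q+1-p)}$. Then on $\|u\|=r_\lambda$ we get $E_\lambda\ge c'\lambda^{-p/(q+1-p)}$, while $E_\lambda(0)=0$. Fix $\varphi\ge0$ smooth with compact support. By \eqref{eq:1.4}, $E_\lambda(t\varphi)\to-\infty$, and one can pick $e=t_\lambda\varphi$ with $t_\lambda\sim\lambda^{-1/(q+1-p)}$. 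This yields an upper bound on the mountain pass level $c_\lambda\le C\lambda^{-p/(q+1-p)}$, matching the lower bound.

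Palais--Smale sequences are bounded via \eqref{eq:1.5}:
\[
E_\lambda(u_n)-\tfrac1\theta E_\lambda'(u_n)u_n\ge(\tfrac1p-\tfrac1\theta)\|u_n\|^p-C .
\]
Strong convergence then follows from the $(S_+)$ property of the operator and compactness of $u\mapsto f(u)$. So for each $\lambda$ we obtain a critical point $u_\lambda$ with $c\lambda^{-p/(q+1-p)}\le E_\lambda(u_\lambda)\le C\lambda^{-p/(q+1-p)}$. By (H3) this also gives $\|u_\lambda\|\sim\lambda^{-1/(q+1-p)}$.

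\textbf{Step 2: rescaling and regularity.} Set $w_\lambda=\lambda^{1/(q+1-p)}u_\lambda$, so that $\|w_\lambda\|$ is bounded above and below. The rescaled equation is
\[
-\Delta_p w+\lambda^{\gamma}(-\Delta)^s_p w=\lambda^{(p-1)/(q+1-p)}\lambda f(\lambda^{-1/(q+1-p)}w)=:g_\lambda(w),
\]
where the factor $\lambda^\gamma$ in front of the nonlocal term is in fact $1$ because both operators are $(p-1)$-homogeneous. By (H1), $|g_\lambda(w)|\le B(|w|^q+\lambda^{q/(q+1-p)})$.

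Next I need an $L^\infty$ bound on $w_\lambda$, uniform in $\lambda$. I would obtain it by Moser iteration or a blow-up argument using the subcritical growth $q<p_s^*-1$; the restriction $q<\frac{sp}{N}p_s^*$ is presumably what makes this a priori estimate close. With that bound, the global regularity of Antonini et al.~\cite{antonini2025global} gives $\|w_\lambda\|_{C^{1,\theta}(\overline\Omega)}\le C$. By Arzel\`a--Ascoli, $w_\lambda\to w$ in $C^1(\overline\Omega)$ along a subsequence. The limit solves $-\Delta_p w+(-\Delta)^s_pw=h$, where $h\ge A w^q\ge0$ on $\{w>0\}$ and $h=0$ where $w\le0$, because of $f=0$ on $(-\infty,-1]$ and the scaling. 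The limit is nontrivial because $\|w_\lambda\|$ is bounded below and the convergence is strong.

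\textbf{Step 3: positivity.} The strong maximum principle and Hopf lemma of \cite{antonini2025global} give $w>0$ in $\Omega$ and $\partial_\nu w<0$ on $\partial\Omega$. The $C^1$ convergence then yields $w_\lambda>0$ in $\Omega$ for $\lambda<\lambda^*$, hence $u_\lambda>0$.

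\textbf{Main obstacle.} The delicate part is Step 2–3. It requires the uniform $L^\infty$ and $C^{1,\theta}$ estimate, and it requires showing the limit is nonnegative. For the latter I would first test the equation with $w_\lambda^-$. Since $f(u)$ is bounded below by $-A$ and the negative part satisfies $|u_\lambda^-|$ small relative to the scale, this gives $\|w_\lambda^-\|\to0$. The mixed comparison principle then lets me conclude $w\ge0$ before applying Hopf.
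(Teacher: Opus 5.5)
Your proposal is correct and follows essentially the same route as the paper: mountain-pass critical points at scale $\lambda^{-r}$, rescaling to a bounded family (you divide by $\lambda^{-r}$, the paper by $\|u_\lambda\|_\infty$), uniform $C^{1,\theta}$ bounds from Antonini et al., nonnegativity of the limit from $f$ being bounded below, and the strong maximum principle. The uniform $L^\infty$ bound you defer does close, since $g_\lambda(w_\lambda)$ is bounded in $L^{p_s^*/q}$ and $p_s^*/q>N/(sp)$ is exactly the hypothesis $q<\frac{sp}{N}p_s^*$ (the paper also only asserts this step); your use of the Hopf lemma is genuinely needed to pass from $w>0$ in $\Omega$ to $w_\lambda>0$ near $\partial\Omega$, a point the paper's final step glosses over.
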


\section{Preliminaries}
Let $s\in(0,1)$, $1\le p<\infty$, and let $\Omega\subset\mathbb{R}^N$ be a bounded open set with a smooth boundary. Define
\[
\mathcal{X}_{1,p}(\Omega):=\{u\in W^{1,p}(\mathbb{R}^N): u=0 \text{ a.e. in } \mathbb{R}^N\setminus\Omega\},
\]
endowed with the norm
\[
\rho(u):=\Big(\|\nabla u\|_p^p+[u]_{s,\mathbb{R}^N}^p\Big)^{1/p},
\]
where
\[
[u]_{s,\mathbb{R}^N}^p:=\iint_{\mathbb{R}^N\times\mathbb{R}^N}\frac{|u(x)-u(y)|^p}{|x-y|^{N+sp}}\,dx\,dy
\]
is the Gagliardo seminorm. Here $\|\cdot\|_q$ denotes the norm in $L^q(\Omega)$ for $1\le q\le\infty$.

Since $\Omega$ is smooth, we have $\mathcal{X}_{1,p}(\Omega)=W_0^{1,p}(\Omega)$ (see \citep[Proposition~9.18]{brezis2011functional}). Moreover, $C_0^\infty(\Omega)$ is dense in $\mathcal{X}_{1,p}(\Omega)$, and $\mathcal{X}_{1,p}(\Omega)$ is separable and reflexive.

The Sobolev embedding theorem, i.e., $\mathcal{X}_{1,p}(\Omega) \hookrightarrow L^q(\Omega)$, provides a constant $S_1 >0$ such that 
\begin{equation}\label{eq:2.6}
\|v\|_{q+1}\le S_1\,\rho(v)
\qquad \text{for all } v\in\mathcal{X}_{1,p}(\Omega).
\end{equation}

For $s\in\mathbb{R}$, set $\omega_p(s)=|s|^{p-2}s$. A weak solution of \eqref{eq:1.1} is a function $u\in\mathcal{X}_{1,p}(\Omega)$ such that for all $\varphi\in C_0^\infty(\Omega)$,
\begin{equation}
\int_{\Omega} |\nabla u|^{p-2}\nabla u\cdot\nabla\varphi\,dx
+\iint_{\mathbb{R}^N\times\mathbb{R}^N}
\omega_p(u(x)-u(y))(\varphi(x)-\varphi(y))
\frac{dx\,dy}{|x-y|^{N+sp}}
=
\lambda\int_{\Omega} f(u)\varphi\,dx .
\end{equation}

Define the functional $E_\lambda:\mathcal{X}_{1,p}(\Omega)\to\mathbb{R}$ by
\begin{equation}\label{eq2.3}
E_\lambda(u)
=\frac{1}{p}\int_{\Omega}|\nabla u|^p\,dx
+\frac{1}{p}\iint_{\mathbb{R}^N\times\mathbb{R}^N}
\frac{|u(x)-u(y)|^p}{|x-y|^{N+sp}}\,dx\,dy
-\lambda\int_{\Omega}F(u)\,dx
=\frac{1}{p}\rho(u)^p-\lambda\int_{\Omega}F(u)\,dx .
\end{equation}

It is well known that $E_\lambda\in C^1(\mathcal{X}_{1,p}(\Omega),\mathbb{R})$ and
\[
\begin{aligned}
\langle E_\lambda'(u),\varphi\rangle
=&\int_{\Omega} |\nabla u|^{p-2}\nabla u\cdot\nabla\varphi\,dx
+\iint_{\mathbb{R}^N\times\mathbb{R}^N}
\omega_p(u(x)-u(y))(\varphi(x)-\varphi(y))
\frac{dx\,dy}{|x-y|^{N+sp}}\\
&-\lambda\int_{\Omega}f(u)\varphi\,dx .
\end{aligned}
\]
Hence, the critical points of $E_\lambda$ coincide with the weak solutions of \eqref{eq:1.1}.

Finally, we set 
\begin{equation}\label{r}
r=\frac{1}{q+1-p}>0,
\end{equation}
which will be used throughout the paper.
\begin{lemma}\label{lemma:2.1}
Let $\varphi \in C_0^{\infty}(\Omega)$ be a positive function with $\rho(\varphi)=1$. 
Then there exists $\lambda_1>0$ such that for all $\lambda\in(0,\lambda_1)$,
\[
E_\lambda(m)\le 0,
\]
where $m=c\lambda^{-r}\varphi$, $r$ is defined in \eqref{r}, and
$c=\Big(2p^{-1}A_1^{-1}\|\varphi\|_{q+1}^{-(q+1)}\Big)^r .$
\end{lemma}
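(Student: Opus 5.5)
Since $\rho(\varphi)=1$ and $\varphi>0$, we have $m=c\lambda^{-r}\varphi\ge 0$, so we can use the lower bound \eqref{eq:1.4} for $F$ on nonnegative arguments. By homogeneity of $\rho$,
\[
E_\lambda(m)=\frac{1}{p}c^p\lambda^{-rp}-\lambda\int_\Omega F(c\lambda^{-r}\varphi)\,dx
\le \frac{1}{p}c^p\lambda^{-rp}-\lambda A_1 c^{q+1}\lambda^{-r(q+1)}\|\varphi\|_{q+1}^{q+1}+\lambda A_1C_1|\Omega| .
\]

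The exponents match because of the definition \eqref{r} of $r$. Indeed, $1-r(q+1)=r\big((q+1-p)-(q+1)\big)=-rp$. Hence the first two terms both carry $\lambda^{-rp}$:
\[
E_\lambda(m)\le \lambda^{-rp}c^p\Big(\frac1p-A_1c^{q+1-p}\|\varphi\|_{q+1}^{q+1}\Big)+\lambda A_1C_1|\Omega| .
\]

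Next we use the choice of $c$. Since $c^{q+1-p}=c^{1/r}=2p^{-1}A_1^{-1}\|\varphi\|_{q+1}^{-(q+1)}$, the bracket equals $\frac1p-\frac2p=-\frac1p$. Therefore
\[
E_\lambda(m)\le -\frac{c^p}{p}\lambda^{-rp}+A_1C_1|\Omega|\,\lambda .
\]

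The right-hand side is nonpositive exactly when $\lambda^{1+rp}\le c^p/(pA_1C_1|\Omega|)$. So we set
\[
\lambda_1=\Big(\frac{c^p}{pA_1C_1|\Omega|}\Big)^{1/(1+rp)}>0,
\]
and then $E_\lambda(m)\le 0$ for all $\lambda\in(0,\lambda_1)$.

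The only delicate point is the exponent bookkeeping $1-r(q+1)=-rp$. The other thing to check is that \eqref{eq:1.4} may be applied pointwise, which holds because $m\ge0$ on $\Omega$; everything else is direct computation.
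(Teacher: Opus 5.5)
Your proof is correct and follows the paper's argument essentially line by line. Both apply the lower bound on $F$ for nonnegative arguments to $m\ge 0$ and use the exponent identity $1-r(q+1)=-rp$. Both choose $c$ so that the $\lambda^{-rp}$ coefficient becomes $-c^p/p$, and both arrive at the same threshold $\lambda_1=\bigl(c^p/(pA_1C_1|\Omega|)\bigr)^{1/(1+rp)}$.
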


\begin{proof}
Let $m=\ell\varphi$ with $\ell=c\lambda^{-r}$. Using \eqref{eq:1.4} and $\rho(\varphi)=1$, we obtain
\[
E_\lambda(m)
=\frac{1}{p}\rho(\ell\varphi)^p-\lambda\int_\Omega F(\ell\varphi)\,dx
\le \frac{\ell^p}{p}-\lambda A_1\ell^{q+1}\|\varphi\|_{q+1}^{q+1}
+\lambda A_1C_1|\Omega|.
\]
Since $r=\frac{1}{q+1-p}$, we have $rp=r(q+1)-1$. Substituting $\ell=c\lambda^{-r}$ gives
\[
E_\lambda(m)
\le \lambda^{-rp}\!\left(\frac{c^p}{p}-A_1c^{q+1}\|\varphi\|_{q+1}^{q+1}\right)
+\lambda A_1C_1|\Omega|.
\]
By the choice of $c$,
\[
A_1c^{q+1}\|\varphi\|_{q+1}^{q+1}=\frac{2c^p}{p},
\]
and hence
\[
E_\lambda(m)\le -\frac{c^p}{p}\lambda^{-rp}+\lambda A_1C_1|\Omega|.
\]
Therefore $E_\lambda(m)\le0$ whenever
\[
\lambda A_1C_1|\Omega|\le \frac{c^p}{p}\lambda^{-rp},
\]
that is,
\[
\lambda\le
\left(\frac{c^p}{pA_1C_1|\Omega|}\right)^{\frac{1}{rp+1}}
=: \lambda_1 .
\]
Hence, for all $\lambda\in(0,\lambda_1),$  $E_\lambda(m)\le0$.
\end{proof}

\begin{lemma}\label{lemma:2.2}
Let 
\[
\tau=\min\{(2pS_1^{q+1}B_1)^{-r},\,c\}>0,
\]
where $S_1>0$ is the constant in \eqref{eq:2.6}. Then there exist constants $c_1>0$ and $0<\lambda_2<1$ such that if $\rho(u)=\tau\lambda^{-r}$, then
\[
E_\lambda(u)\ge c_1(\tau\lambda^{-r})^p
\qquad \text{for all } \lambda\in(0,\lambda_2).
\]
\end{lemma}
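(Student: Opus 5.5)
The plan is to bound $E_\lambda(u)$ from below using the upper estimate \eqref{eq:1.3} on $F$ together with the Sobolev inequality \eqref{eq:2.6}. Then I show that, with $\rho(u)=\tau\lambda^{-r}$, the nonlinear term costs at most half of the kinetic term, while the constant term is negligible for small $\lambda$.

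\textbf{Step 1 (lower bound).} By \eqref{eq:1.3} and \eqref{eq:2.6}, for any $u\in\mathcal{X}_{1,p}(\Omega)$,
\[
E_\lambda(u)\ge \frac{1}{p}\rho(u)^p-\lambda B_1\|u\|_{q+1}^{q+1}-\lambda B_1|\Omega|
\ge \frac{1}{p}\rho(u)^p-\lambda B_1S_1^{q+1}\rho(u)^{q+1}-\lambda B_1|\Omega|.
\]

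\textbf{Step 2 (scaling).} Insert $\rho(u)=\tau\lambda^{-r}$. Since $r(q+1)-1=rp$ (as in the proof of Lemma~\ref{lemma:2.1}), we get $\lambda\rho(u)^{q+1}=\tau^{q+1}\lambda^{-rp}$. Hence
\[
E_\lambda(u)\ge \lambda^{-rp}\Big(\frac{\tau^p}{p}-B_1S_1^{q+1}\tau^{q+1}\Big)-\lambda B_1|\Omega|
=(\tau\lambda^{-r})^p\Big(\frac1p-B_1S_1^{q+1}\tau^{q+1-p}\Big)-\lambda B_1|\Omega|.
\]
Since $q+1-p=1/r$, the choice $\tau\le(2pS_1^{q+1}B_1)^{-r}$ gives $\tau^{1/r}\le (2pS_1^{q+1}B_1)^{-1}$. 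Therefore $B_1S_1^{q+1}\tau^{q+1-p}\le \frac{1}{2p}$, and the bracket is at least $\frac{1}{2p}$.

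\textbf{Step 3 (absorb the constant).} We need $\lambda B_1|\Omega|\le \frac{1}{4p}\tau^p\lambda^{-rp}$, which is equivalent to $\lambda^{1+rp}\le \tau^p/(4pB_1|\Omega|)$. Define
\[
\lambda_2=\min\Big\{\tfrac12,\ \big(\tau^p/(4pB_1|\Omega|)\big)^{1/(1+rp)}\Big\}\in(0,1).
\]
For $\lambda\in(0,\lambda_2)$ this gives $E_\lambda(u)\ge \frac{1}{4p}(\tau\lambda^{-r})^p$, so the lemma holds with $c_1=\frac{1}{4p}$.

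No step is a genuine obstacle. The only care needed is the exponent bookkeeping $r(q+1)-1=rp$ and $\tau^{q+1-p}=\tau^{1/r}$. Note that $\tau$ is independent of $\lambda$, so $\lambda_2$ is well defined. The bound $\tau\le c$ is not needed here; presumably it is used later to compare $\rho(m)$ with the sphere radius in the mountain pass geometry.
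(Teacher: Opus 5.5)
Your proof is correct and follows the paper's argument step for step. You use the same lower bound via \eqref{eq:1.3} and \eqref{eq:2.6}, the same identity $r(q+1)=rp+1$, the same use of $\tau\le(2pS_1^{q+1}B_1)^{-r}$ to retain half of the $\frac{1}{p}\rho(u)^p$ term, and the same $c_1=\frac{1}{4p}$. Your extra $\min\{\tfrac12,\cdot\}$ in $\lambda_2$ is a small improvement: it guarantees the stated $\lambda_2<1$, which the paper's choice $\lambda_2=\bigl(\tau^p/(4pB_1|\Omega|)\bigr)^{1/(1+rp)}$ does not ensure by itself.
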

\begin{proof}
Let $u\in\mathcal{X}_{1,p}(\Omega)$ with $\rho(u)=\tau\lambda^{-r}$. Using \eqref{eq:1.3} and the Sobolev embedding \eqref{eq:2.6}, we obtain
\[
\begin{aligned}
E_\lambda(u)
&=\frac{1}{p}\rho(u)^p-\lambda\int_\Omega F(u)\,dx \\
&\ge \frac{1}{p}\rho(u)^p-\lambda B_1\|u\|_{q+1}^{q+1}-\lambda B_1|\Omega|\\
&\ge \frac{1}{p}\tau^p\lambda^{-rp}
-\lambda B_1S_1^{q+1}(\tau\lambda^{-r})^{q+1}
-\lambda B_1|\Omega|.
\end{aligned}
\]
Since $r=\frac{1}{q+1-p}$, we have $r(q+1)=rp+1$. Hence
\[
E_\lambda(u)\ge 
\lambda^{-rp}\!\left(\frac{\tau^p}{p}-B_1S_1^{q+1}\tau^{q+1}\right)
-\lambda B_1|\Omega|.
\]
By the definition of $\tau$, 
\[
B_1S_1^{q+1}\tau^{q+1}\le \frac{\tau^p}{2p},
\]
and therefore
\[
E_\lambda(u)\ge 
\frac{\tau^p}{2p}\lambda^{-rp}-\lambda B_1|\Omega|.
\]
Choose $c_1=\frac{1}{4p}$. Then
\[
E_\lambda(u)\ge c_1(\tau\lambda^{-r})^p
\]
provided
\[
\lambda^{1+rp}\le \frac{\tau^p}{4pB_1|\Omega|}.
\]
Thus, the result holds for
\[
\lambda_2=\left(\frac{\tau^p}{4pB_1|\Omega|}\right)^{\frac{1}{1+rp}}.
\]
\end{proof}

To apply the Mountain Pass Theorem, it remains to verify that $E_\lambda$ satisfies the Palais--Smale condition, which will be established in the next lemma.
\begin{lemma}\label{lemma:2.3}
There exists a constant $c_2>0$ such that for all 
$\lambda\in (0,\lambda_3)$, where $\lambda_3=\min\{\lambda_1,\lambda_2\}$,
the functional $E_\lambda$ possesses a critical point $u_\lambda$ satisfying
\[
c_1\lambda^{-rp}\le E_\lambda(u_\lambda)\le c_2\lambda^{-rp},
\]
where $c_1>0$ is the constant given in Lemma~\ref{lemma:2.2}.
\end{lemma}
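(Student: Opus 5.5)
The plan is to apply the Mountain Pass Theorem to $E_\lambda$ on $\mathcal{X}_{1,p}(\Omega)$, with the geometry supplied by Lemmas~\ref{lemma:2.1} and~\ref{lemma:2.2}. Fix $\lambda\in(0,\lambda_3)$. Since $E_\lambda(0)=0$ and $\rho(0)=0<\tau\lambda^{-r}$, the origin lies inside the sphere $\{\rho(u)=\tau\lambda^{-r}\}$. On that sphere Lemma~\ref{lemma:2.2} gives $E_\lambda\ge c_1\tau^p\lambda^{-rp}>0$.

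The point $m=c\lambda^{-r}\varphi$ from Lemma~\ref{lemma:2.1} satisfies $E_\lambda(m)\le0$ and $\rho(m)=c\lambda^{-r}$. This is at least the sphere radius, since $\tau\le c$. If $\tau=c$, so that $m$ lies exactly on the sphere, I would replace $m$ by $(1+\varepsilon)m$. The same computation as in Lemma~\ref{lemma:2.1} (the leading term $-\lambda A_1\ell^{q+1}\|\varphi\|^{q+1}_{q+1}$ dominates as $\ell$ grows) keeps the energy $\le 0$, and the point then lies strictly outside the sphere.

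\textbf{Palais--Smale condition.} This is the main step. Let $(u_n)$ satisfy $E_\lambda(u_n)\to d$ and $E_\lambda'(u_n)\to0$. Boundedness comes from (H3):
\[
\theta E_\lambda(u_n)-\langle E_\lambda'(u_n),u_n\rangle=\Big(\tfrac{\theta}{p}-1\Big)\rho(u_n)^p+\lambda\int_\Omega\big(f(u_n)u_n-\theta F(u_n)\big)\,dx\ge\Big(\tfrac{\theta}{p}-1\Big)\rho(u_n)^p+\lambda M|\Omega|.
\]
The left side is at most $C+o(1)\rho(u_n)$, so $\rho(u_n)$ is bounded because $\theta>p$.

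By reflexivity, pass to a subsequence with $u_n\rightharpoonup u$ weakly in $\mathcal{X}_{1,p}(\Omega)$ and strongly in $L^{q+1}(\Omega)$ (compact embedding, since $q+1<p^*$). By (H1), $|f(t)|\le C(|t|^q+1)$, so $\int_\Omega f(u_n)(u_n-u)\,dx\to0$. Hence
\[
\langle E'_\lambda(u_n)-E'_\lambda(u),u_n-u\rangle\to0,
\]
which reads
\[
\langle -\Delta_p u_n+(-\Delta)^s_p u_n-(-\Delta_p u+(-\Delta)^s_p u),\,u_n-u\rangle\to0.
\]
Each of the two operators is monotone and of type $(S_+)$; the standard Simon inequalities for $\omega_p$ give this in both cases $p\ge2$ and $p<2$. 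So $u_n\to u$ strongly, and (PS) holds at every level.

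The Mountain Pass Theorem then yields a critical point $u_\lambda$ at level
\[
E_\lambda(u_\lambda)=\inf_{\gamma\in\Gamma}\max_{t\in[0,1]}E_\lambda(\gamma(t)),
\]
where $\Gamma$ is the set of paths joining $0$ to $m$. Every such path crosses the sphere, which gives the lower bound $c_1\lambda^{-rp}$ up to the factor $\tau^p$. I would either absorb $\tau^p$ into $c_1$ or read the statement's $c_1$ as $c_1\tau^p$; this is a constant bookkeeping point only.

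For the upper bound, use the segment $\gamma(t)=t m$. Applying \eqref{eq:1.4} with $\ell=tc\lambda^{-r}$ and $\lambda<1$,
\[
E_\lambda(tm)\le \frac{(tc)^p}{p}\lambda^{-rp}+\lambda A_1C_1|\Omega|\le \frac{c^p}{p}\lambda^{-rp}+A_1C_1|\Omega|\lambda^{-rp},
\]
since $\lambda\le1\le\lambda^{-rp}$ (note $\lambda_2<1$). Thus $c_2=c^p/p+A_1C_1|\Omega|$ works.
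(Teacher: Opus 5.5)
Your proposal is correct and follows the paper's proof. It establishes (PS) via the Ambrosetti--Rabinowitz bound, compactness in $L^{q+1}$ and the $(S_+)$ property, takes the mountain pass geometry from Lemmas~\ref{lemma:2.1}--\ref{lemma:2.2}, and gets the upper bound along the segment $t\mapsto tm$ with the same $c_2=c^p/p+A_1C_1|\Omega|$. Your observations that the (PS) relation is an inequality and that the lower bound actually obtained is $c_1\tau^p\lambda^{-rp}$ correct points the paper glosses over; the case $\tau=c$ you guard against cannot in fact occur, since Lemma~\ref{lemma:2.2} would then force $E_\lambda(m)>0$, contradicting Lemma~\ref{lemma:2.1}.
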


\begin{proof}
We first show that $E_\lambda$ satisfies the Palais--Smale condition. 
Let $\{u_n\}\subset \mathcal{X}_{1,p}(\Omega)$ be a sequence such that
\[
E_\lambda(u_n)\ \text{is bounded}, \qquad 
E_\lambda'(u_n)\to 0 \quad \text{in } \mathcal{X}_{1,p}(\Omega)^* \text{ when }n \rightarrow \infty.
\]

Using the Ambrosetti--Rabinowitz condition \eqref{eq:1.5}, we obtain
\[
E_\lambda(u_n)-\frac{1}{\theta}\langle E_\lambda'(u_n),u_n\rangle
=
\left(\frac{1}{p}-\frac{1}{\theta}\right)\rho(u_n)^p
+\frac{\lambda M}{\theta}|\Omega|.
\]
Since $\theta>p$ and $\{E_\lambda(u_n)\}$ is bounded while
$E_\lambda'(u_n)\to0$, it follows that $\{u_n\}$ is bounded in
$\mathcal{X}_{1,p}(\Omega)$.

Hence, up to a subsequence, $u_n\rightharpoonup u$ weakly in
$\mathcal{X}_{1,p}(\Omega)$ and
\[
u_n\to u \quad \text{strongly in } L^{q+1}(\Omega).
\]
Using the monotonicity of the operators
$|\nabla u|^{p-2}\nabla u$ and $\omega_p(u(x)-u(y))$, together with
$E_\lambda'(u_n)\to0$, one obtains
\[
\rho(u_n)\to \rho(u),
\]
which implies $u_n\to u$ strongly in $\mathcal{X}_{1,p}(\Omega)$.
Therefore $E_\lambda$ satisfies the Palais--Smale condition.

Next, from Lemma~\ref{lemma:2.1}, for $0\le \ell\le c\lambda^{-r}$ we have
\[
E_\lambda(\ell\varphi)
\le \frac{\ell^p}{p}+\lambda A_1C_1|\Omega|
\le \left(\frac{c^p}{p}+A_1C_1|\Omega|\right)\lambda^{-rp}
=:c_2\lambda^{-rp}.
\]
Hence
\[
\max_{0\le \ell\le c\lambda^{-r}}E_\lambda(\ell\varphi)
\le c_2\lambda^{-rp}.
\]

By Lemmas~\ref{lemma:2.1} and \ref{lemma:2.2}, the functional $E_\lambda$
has the mountain pass geometry. Since it also satisfies the
Palais--Smale condition, the Mountain Pass Theorem yields a critical
point $u_\lambda\in\mathcal{X}_{1,p}(\Omega)$ such that
\[
E_\lambda(u_\lambda)
=\inf_{\gamma\in\Gamma}\max_{t\in[0,1]}E_\lambda(\gamma(t)),
\]
where
\[
\Gamma=\{\gamma\in C([0,1],\mathcal{X}_{1,p}(\Omega)):\,
\gamma(0)=0,\ \gamma(1)=c\lambda^{-r}\varphi\}.
\]

Finally, Lemma~\ref{lemma:2.2} and the above estimate imply
\[
c_1\lambda^{-rp}\le E_\lambda(u_\lambda)\le c_2\lambda^{-rp}.
\]
\end{proof}
\begin{remark}
\,
\begin{enumerate}
    \item[(i)] Since $E'_\lambda(u_\lambda)=0$, we have
\[
\rho(u_\lambda)^p
=\lambda\int_\Omega f(u_\lambda)u_\lambda\,dx .
\]
Using the Ambrosetti--Rabinowitz condition \eqref{eq:1.5}, we obtain
\[
\begin{aligned}
\left(\frac{1}{p}-\frac{1}{\theta}\right)\rho(u_\lambda)^p
&=\frac{1}{p}\rho(u_\lambda)^p
-\frac{\lambda}{\theta}\int_\Omega f(u_\lambda)u_\lambda\,dx \\
&\le
\frac{1}{p}\rho(u_\lambda)^p
-\frac{\lambda}{\theta}\int_\Omega f(u_\lambda)u_\lambda\,dx
+\frac{\lambda}{\theta}M|\Omega| \\
&\le
\frac{1}{p}\rho(u_\lambda)^p
-\lambda\int_\Omega F(u_\lambda)\,dx
=E_\lambda(u_\lambda).
\end{aligned}
\]
By Lemma~\ref{lemma:2.3},
\[
E_\lambda(u_\lambda)\le c_2\lambda^{-rp}.
\]
Hence there exists a constant $c_3>0$, independent of $\lambda$, such that
\[
\rho(u_\lambda)^p\le c_3\,\lambda^{-rp}.
\]
Consequently,
\begin{equation}\label{eq:2.18}
\rho(u_\lambda)\le c_3\,\lambda^{-r}.
\end{equation}
    \item[(ii)] The weak solution $u_{\lambda}$ obtained in Lemma~\ref{lemma:2.3} is in $L^{\infty}(\Omega)$ by the similar argument used in the proof of \citep[Theorem~4.1]{biagi2024brezis}.
\end{enumerate}
\end{remark}

\begin{lemma}\label{lemma:2.5}
Let $u_\lambda$ be a weak solution of problem \eqref{eq:1.1} obtained by the Mountain Pass Theorem in Lemma~\ref{lemma:2.3}. Then there exists a constant $C>0$ such that for all $0<\lambda<\lambda_3$,
\[
C\,\lambda^{-r}\le \|u_\lambda\|_{L^\infty(\Omega)} .
\]
\end{lemma}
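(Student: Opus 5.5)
We argue by contradiction with the energy lower bound from Lemma~\ref{lemma:2.3}. The idea is that the mountain pass level $E_\lambda(u_\lambda)\ge c_1\lambda^{-rp}$ is large, while a small sup-norm would make the energy small.

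First, since $E_\lambda'(u_\lambda)=0$, testing with $u_\lambda$ gives $\rho(u_\lambda)^p=\lambda\int_\Omega f(u_\lambda)u_\lambda\,dx$. Therefore
\[
E_\lambda(u_\lambda)=\lambda\int_\Omega\Big(\tfrac1p f(u_\lambda)u_\lambda-F(u_\lambda)\Big)dx .
\]
Set $K=\|u_\lambda\|_{L^\infty(\Omega)}$, which is finite by Remark~(ii). We bound the integrand pointwise in terms of $K$.
\begin{itemize}
\item For $0\le t\le K$, (H1) gives $|f(t)t|\le B(K^{q+1}+K)$.
\item For $t\le -1$, $f(t)=0$, so $F(t)=F(-1)$ is a constant.
\item On $[-1,0]$ and on $[0,1]$, $f$ is continuous and hence bounded, so $|f(t)t|$ and $|F(t)|$ are bounded by an absolute constant $D$.
\item For $0\le t\le K$, $|F(t)|\le \int_0^t|f|\le B(K^{q+1}+K)$.
\end{itemize}
Altogether we obtain
\[
c_1\lambda^{-rp}\le E_\lambda(u_\lambda)\le \lambda|\Omega|\,C'\big(K^{q+1}+1\big),
\]
with $C'$ depending only on $f$ and $p$. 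It is cleaner to use the upper bound $E_\lambda(u_\lambda)\le \frac{\lambda}{p}\int f(u)u-\lambda\int F(u)$ directly with these constants.

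Next, rearrange this inequality to get
\[
K^{q+1}\ge \frac{c_1}{C'|\Omega|}\lambda^{-rp-1}-1 .
\]
Since $r(q+1)=rp+1$, the exponent matches: $\lambda^{-rp-1}=(\lambda^{-r})^{q+1}$. Because $\lambda<\lambda_3\le\lambda_2<1$, the quantity $\lambda^{-rp-1}\ge1$. So we can absorb the constant $-1$ in one of two ways. Either shrink the constant, which requires $\lambda^{-rp-1}$ to dominate and hence possibly decreasing $\lambda_3$ slightly. Or note directly that $\frac{c_1}{C'|\Omega|}\lambda^{-rp-1}-1\ge \frac{c_1}{2C'|\Omega|}\lambda^{-rp-1}$ whenever $\lambda$ is small. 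If we want no restriction on $\lambda$ beyond $\lambda<\lambda_3$, we instead handle the two cases $K\ge1$ and $K<1$: in the bounded regime the energy is at most $\lambda|\Omega|C''$, which contradicts $c_1\lambda^{-rp}$ as soon as $\lambda$ is small. Taking $(q+1)$-th roots then gives $K\ge C\lambda^{-r}$.

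The main obstacle is the contribution of the negative part of $u_\lambda$, where $f$ has no explicit growth bound from (H1). This is handled by the observation that $f$ vanishes on $(-\infty,-1]$, so $f(t)t$ and $F(t)$ are uniformly bounded for $t\le 0$. A secondary issue is ensuring that the constant in the upper bound dominates uniformly for $\lambda<\lambda_3$. If needed, this may require replacing $\lambda_3$ by a slightly smaller threshold, which is harmless for Theorem~\ref{main}.
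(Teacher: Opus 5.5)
Your proposal is correct and follows essentially the paper's argument. Both combine the identity $\rho(u_\lambda)^p=\lambda\int_\Omega f(u_\lambda)u_\lambda\,dx$ with the mountain-pass lower bound $E_\lambda(u_\lambda)\ge c_1\lambda^{-rp}$, bound $f(t)t$ and $-F(t)$ pointwise in terms of $K=\|u_\lambda\|_{L^\infty(\Omega)}$ (your treatment of $t\le 0$ via $f=0$ on $(-\infty,-1]$ is more careful than the paper's), and conclude via $r(q+1)=rp+1$. On the smallness issue you flag: since $\frac1p f(t)t-F(t)=O(|t|)$ for $|t|\le 1$ and is constant for $t\le -1$, you may replace $K^{q+1}+1$ by $K^{q+1}+K$, as the paper effectively does; splitting into $K\ge 1$ and $K<1$ then gives $K\ge C\lambda^{-r}$ for all $\lambda<1$, so $\lambda_3$ need not be shrunk.
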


\begin{proof}
Since $E'_\lambda(u_\lambda)=0$, we have
\[
\rho(u_\lambda)^p
=\lambda\int_\Omega f(u_\lambda)u_\lambda\,dx .
\]
Using Lemma~\ref{lemma:2.3} and \eqref{eq:1.4}, we obtain
\[
\begin{aligned}
\lambda\int_\Omega f(u_\lambda)u_\lambda\,dx
&=\rho(u_\lambda)^p \\
&=pE_\lambda(u_\lambda)+p\lambda\int_\Omega F(u_\lambda)\,dx \\
&\ge p c_1\lambda^{-rp}+p\lambda|\Omega|\min F
\ge C_1\,\lambda^{-rp},
\end{aligned}
\]
for some constant $C_1>0$.

On the other hand, by \eqref{eq:1.2} there exists $B>0$ such that
\[
|f(s)s|\le B\big(|s|^{q+1}+|s|\big)
\qquad \text{for all } s\in\mathbb{R}.
\]
Hence
\[
\begin{aligned}
\lambda\int_\Omega f(u_\lambda)u_\lambda\,dx
&\le B\lambda\int_\Omega\big(|u_\lambda|^{q+1}+|u_\lambda|\big)\,dx \\
&\le B\lambda|\Omega|
\big(\|u_\lambda\|_{L^\infty(\Omega)}^{q+1}
+\|u_\lambda\|_{L^\infty(\Omega)}\big).
\end{aligned}
\]

Combining the above estimates yields
\[
C_1\lambda^{-rp}
\le B\lambda|\Omega|
\big(\|u_\lambda\|_{L^\infty(\Omega)}^{q+1}
+\|u_\lambda\|_{L^\infty(\Omega)}\big),
\]
which implies
\[
\|u_\lambda\|_{L^\infty(\Omega)} \ge C\,\lambda^{-r}
\]
for some constant $C>0$ independent of $\lambda$.
\end{proof}

\noindent \textbf{Proof of Theorem \ref{main}.}
We argue by contradiction. Suppose that there exists a sequence 
$\{\lambda_j\}\subset (0,1)$ with $\lambda_j\to0$ and corresponding solutions 
$\{u_{\lambda_j}\}$ such that
\[
m\bigl(\{x\in\Omega: u_{\lambda_j}(x)\le0\}\bigr)>0 .
\]
Define
\[
w_j:=\frac{u_{\lambda_j}}{\|u_{\lambda_j}\|_\infty}.
\]
Then $w_j$ satisfies
\begin{equation}\label{eq:2.21}
-\Delta_p w_j+(-\Delta)^s_p w_j
=\lambda_j g(u_{\lambda_j})
\quad\text{in }\Omega,
\end{equation}
where $g(u_{\lambda_j})=f(u_{\lambda_j})\|u_{\lambda_j}\|_\infty^{1-p}$.

From the previous remark and Lemma~\ref{lemma:2.5}, there exists $C_3>0$
such that
\begin{equation}\label{eq:2.22}
\rho(w_j)\le C_3 .
\end{equation}
Hence $\{w_j\}$ is bounded in $\mathcal{X}_{1,p}(\Omega)$. 
By the regularity result in \citep[Theorem~1.1]{antonini2025global},
$\{w_j\}$ is uniformly bounded in $C^{1,\theta}(\overline{\Omega})$ for some 
$\theta\in(0,1)$. Therefore, up to a subsequence,
\[
w_j\to w \quad\text{in } C^{1,\theta_1}(\overline{\Omega})
\]
for every $\theta_1\in(0,\theta)$. Next, we show that $w\geq 0.$ 
Let $v_0$ be the weak solution of
\begin{equation}\label{eq:2.23}
\begin{cases}
-\Delta_p v_0+(-\Delta)^s_p v_0=1 &\text{in }\Omega,\\
v_0=0 &\text{in }\mathbb{R}^N\setminus\Omega .
\end{cases}
\end{equation}
Define
\[
k_j:=\lambda_j\|u_{\lambda_j}\|_\infty^{1-p}\min\{f(t):t\in\mathbb{R}\}
\]
and let $v_j$ solve
\begin{equation}\label{eq:2.24}
\begin{cases}
-\Delta_p v_j+(-\Delta)^s_p v_j=k_j &\text{in }\Omega,\\
v_j=0 &\text{in }\mathbb{R}^N\setminus\Omega .
\end{cases}
\end{equation}
Then $v_j=(-k_j)^{\frac1{p-1}}v_0$. Since
$\lambda_j g(u_{\lambda_j})\ge k_j$, the comparison principle
(\citep[Proposition~4.1]{antonini2025global}) yields
\[
w_j\ge v_j \quad\text{in }\Omega .
\]
As $v_j\to0$ as $j \rightarrow \infty$, we obtain
\[
w\ge0 \quad\text{in }\Omega .
\]

Let $\ell=\frac{Npr}{N-sp}$. Using \eqref{eq:1.2}, Sobolev embedding,
and Lemma~\ref{lemma:2.5}, one shows that
\[
\{\lambda_j g(u_{\lambda_j})\}\ \text{is bounded in } L^\ell(\Omega).
\]
Hence, up to a subsequence,
\[
\lambda_j g(u_{\lambda_j}) \rightharpoonup z
\quad\text{in }L^\ell(\Omega)
\]
for some $z \in L^{\ell}(\Omega)$. Since $\|u_{\lambda_j}\|_{\infty}^{1-p}\lambda_j \rightarrow 0$ as $j \rightarrow \infty$ and $f$ is bounded from below, $z \geq 0$.

Passing to the limit in \eqref{eq:2.21} using the strong convergence
$w_j\to w$ in $C^{1,\theta_1}(\overline{\Omega})$, we obtain
\[
-\Delta_p w+(-\Delta)^s_p w=z\ge0
\quad\text{in }\Omega .
\]

Thus, $w$ is a weak supersolution of
\[
-\Delta_p w+(-\Delta)^s_p w = 0 \quad\text{in }\Omega,
\qquad w\ge0 \text{ in }\mathbb{R}^N\setminus\Omega .
\]
By the strong maximum principle
(\citep[Proposition~6.1]{antonini2025global}),
\[
w>0 \quad\text{in }\Omega .
\]

Since $w_j\to w$ in $C^{1,\theta_1}(\overline{\Omega})$, we have
$w_j>0$ in $\Omega$ for all sufficiently large $j$, which contradicts
\[
m(\{x\in\Omega:u_{\lambda_j}(x)\le0\})>0 .
\]
Therefore, the claim follows. \qed

\bibliographystyle{abbrv}
\bibliography{ref}
\end{document}